\documentclass[12pt,a4paper]{article}
\usepackage{graphicx,url}
\usepackage{amssymb,amsmath, amsthm}
\usepackage{epstopdf}
\DeclareGraphicsRule{.tif}{png}{.png}{`convert #1 `dirname #1`/`basename #1 .tif`.png}

\usepackage{natbib}
\bibliographystyle{unsrtnat}

\usepackage{makeidx}
\usepackage{color}

\newtheorem{theorem}{Theorem}

\newtheorem{assumption}{Assumption}
\newtheorem{remark}{Remark}

\title{On penalized estimation for dynamical systems with small noise}
\author{Alessandro De Gregorio\footnote{
Department of Statistical Sciences,
P.le Aldo Moro 5,
 00185- Rome, Italy - alessandro.degregorio@uniroma1.it}\quad
 Stefano M. Iacus\footnote{Department of Economics, Management and Quantitative Methods,
 Via Conservatorio 7, 20122 - Milan, Italy - stefano.iacus@unimi.it}}

\makeindex

\def\de{{\rm d}}
\def\ve{{\varepsilon}}

\begin{document}

\maketitle

\begin{abstract}
We consider a dynamical system with small noise for which the drift is parametrized by a finite dimensional parameter. For this model we consider minimum distance estimation from continuous time observations under  $l^p$-penalty imposed on the parameters in the spirit of the Lasso approach with the aim of  simultaneous estimation and model selection. We study the  consistency  and the asymptotic distribution of these Lasso-type estimators for different values of $p$. For $p=1$ we also consider the adaptive version of the Lasso estimator and establish its oracle properties.

\end{abstract}

\medskip
{\bf  Keywords:} dynamical systems, lasso estimation, model selection, inference for stochastic processes,
diffusion-type processes, oracle properties.


\section{Introduction}
Usually ordinary differential equation models are the result of averaging and/or neglecting some details of an original system without modeling a complex system with a huge number of degrees of freedom or tuning parameters. Introducing noise is therefore a way to approximate closer the reality of observable complex systems. It is then natural to think of the noise as small, for example when one is considering the dynamics of macroscopic quantities, i.e. averages of quantities of interest over a whole population or in the case of signal that travels through a perturbed medium, etcetera.

Dynamical systems with small perturbations have been indeed widely studied in  \citet{Azencott82} and \citet{FreidlinWentzell98}. Applications of small diffusion processes to mathematical finance and option pricing have been considered in \citet{Yoshida92b}, \citet{Kunitomo01}, \citet{TakaYoshi04}, \citet{UchidaYoshida04} and references therein. Examples from biology and life sciences include \citet{Murray02}, \citet{Bressloff14}, \citet{Ermentrout10}.

Model selection is an important aspect in the above applied fields although sometimes neglected. What occurs for dynamical systems with small noise, is not so different from what happens in ordinary least squares (OLS) model estimation. Indeed, linear regression models are used extensively by many practitioners but, once estimated,  these models are useful as long as the set of parameter (or covariates) is correctly specified. Therefore, the  model selection step is an important part of the analysis. 

To introduce the idea of Lasso-type estimation we begin with linear models and OLS. In this framework model selection occurs when some of the  regression parameters  are estimated as  zero. Different models are compared in terms of information criteria like AIC/BIC or hypotheses testing.
The advantage of the Lasso-type approach over AIC/BIC is that statistical models do not need to be nested but one can rather construct  a single large parametric model merging two orthogonal models and let the selection method to choose one of the two models \citep{Caner09}.

Variable selection becomes particularly important when the true underlying model has a sparse representation. Identifying correctly significant predictors will improve the prediction performance of the fitted model \citep[for an overview of feature selection see][]{FanLi06}.

Considered the linear regression model $Y_i = x^T_i \beta + \ve_i$, with $x_i$ a vector of covariates, $\beta$ a vector of $q>0$ parameters and  $\ve_i$ i.i.d. Gaussian random variables. \citet{Knight00}  proposed the following  $l^p$-penalized estimator for $\beta$
\begin{equation}
\hat\beta_n := \arg\min_u \left(\sum_{i=1}^n (Y_i - x^T_i u)^2 + \lambda_n \sum_{j=1}^q |u_j|^p\right)
\label{eq1}
\end{equation}
for some $p>0$ and $\lambda_n\to 0$ as $n\to\infty$. The family of estimators $\hat\beta_n$ solution to \eqref{eq1}  are a generalization of the Ridge estimators which correspond to the case $p=2$ \citep{Efron}. The original Lasso estimators \citep{Tib96} are obtained setting $p=1$ while OLS is the case $p=0$, not considered here.
The link between Lasso-type estimation and model selection is also due to the fact that, in the limit as $p\to 0$, this procedure approximate the AIC or BIC selection methods, i.e.
$$\lim_{p\to 0} \sum_{j=1}^q |u_j|^p = \sum_{j=1}^q {\bf 1}_{\{u_j\neq 0\}}$$
which amounts to the number of non-null parameters in the model. Here ${\bf 1}_A$ the indicator function for set $A$.

As said, the estimators solutions to \eqref{eq1} are attractive because with them it is possible to perform estimation and model selection in a single step, i.e. the procedure does not need to estimate different models  and compare them later with information criteria as the dimension of the space of the parameters does no change, just some of the components of the vector $\beta_j^*$ are assumed to be zero.
In non-linear models a preliminary simple reparametrization (e.g. $\beta \mapsto \beta'-\beta$) is needed to interpret this approach in terms of model selection.

In this work, we extend the problem in \eqref{eq1} to the class of diffusion processes with small noise solution to the stochastic differential equation $\de X_t = S_t(\theta, X) \de t + \ve  \de W_t$, $t \in [0,T]$, by  replacing the least squares estimation with the minimum distance estimation. The asymptotic is considered as $\ve\to 0$ for fixed $0<T<\infty$ with $\theta\in\Theta\subset \mathbb R^q$ a $q$-dimensional parameter.

Since the seminal works of  \citet{Kuto84, Kuto91, Kuto94} and \citet{Yoshida92a}, statistical inference for continuously observed small diffusion processes is well developed today \citep[see, e.g.,][]{KutoPhil94, Iacus00, IacusKuto01, Yoshida03, UchidaYoshida04a} but the Lasso problem has not been considered so far. Although here we consider only continuous time observations, it is worth mentioning that  there is also a growing literature on parametric inference for discretely observed small diffusion processes \citep[see][]{GenonCatalot90, Laredo90, Sorensen00, Sorensen12, SorenUchida03, Uchida03, Uchida04, Uchida06, Uchida08, GloterSorensen09, Guy14} to which this Lasso problem can be extended. Adaptive Lasso-type estimation for ergodic diffusion processes sampled at discrete time has been studied in \citet{DegIac12} while for continuous time ergodic diffusion processes shrinkage estimation has been considered in \citet{Nkurunziza12}.

This paper is organized as follows. In Section \ref{sec1} we introduce the model, the assumptions and the statement of the problem. In Section \ref{sec2} we study the consistency  of the estimators and derive their  asymptotic distribution for different values of $p$. For $p= 1$ we also consider the case of  adaptive Lasso estimation that is meant to control  asymptotic bias. For the adaptive estimator, we are also able to prove that it represents an oracle procedure.

\section{The Lasso-type problem for dynamical systems with small noise}\label{sec1}

Let us assume that on the probability space $(\Omega,\mathcal F, P),$ with the filtration $\{\mathcal F_t,0\leq t\leq T\}$ (where each $\mathcal F_t,0\leq t\leq T,$ is augmented by sets from $\mathcal F$ having zero $P$-measure), is given a Wiener process $\{W_t,\mathcal F_t,0\leq t\leq T\}.$
Let $X=\{X_t,0 \leq t \leq T\}$ be a real valued diffusion-type process solution to the following stochastic differential equation
\begin{equation}
\de X_t = S_t(\theta, X) \de t + \ve  \de W_t,\quad \varepsilon\in(0,1],
\label{eq:mod1}
\end{equation}
with non random initial condition $X_0=x_0$. The parameter $\theta\in\Theta\subset \mathbb R^q,$ where $\Theta$ is a bounded, open and convex set, supposed to be unknown. Let $(C[0,T],\mathcal B[0,T])$ be the measurable space of continuous functions $x_t$ on $[0,T]$ with $\sigma$-algebra $\mathcal B[0,T]=\sigma\{x_t,0\leq t\leq T\}.$ $P_\theta^{(\varepsilon)}$ denotes the law induced by the process $X$ in $(C[0,T],\mathcal B[0,T])$ when the true parameter is $\theta$. 
We denote by $u = (u_1, \ldots, u_q)^T$ the (transposed) vector $u\in \mathbb R^q$ and the true value of $\theta$ by $\theta^*$.
Let $|| \cdot || = ||\cdot ||_{L_2(\mu)}$  be the $L_2$-norm with respect to some finite measure $\mu$ on $[0,T]$, i.e.
$$
|| f ||^2 = \int_0^T f^2(x) \mu(\de x).
$$
We suppose that the trend coefficient in \eqref{eq:mod1} is of integral type, i.e.
$$
S_t(\theta, X) = V(\theta, t,X) + \int_0^t K(\theta, t,s,X_s)\de s,
$$
where $V(\theta,t,x)$ and $K(\theta, t, s, x)$ are known measurable, non-anticipative functionals such that \eqref{eq:mod1}  has a strong unique solution. 
For example, the usual conditions (1.34) and (1.35) in \citet{Kuto94} and Theorem 4.6 in \citet{LipSh01} about Lipschitz behaviour and linear growing are sufficient; i.e.
\begin{assumption}\label{exun}
For all $t\in[0,T],$ $\theta\in\Theta$ and $X_t,Y_t\in C[0,T]$
\begin{align*}
&|V(\theta,t,X_t)-V(\theta,t,Y_t)|+|K(\theta, t,s,X_t)-K(\theta, t,s,Y_t)|\\
&\leq L_1\int_0^t|X_s-Y_s|\de K_s+L_2|X_t-Y_t|,\\
&|V(\theta,t,X_t)|+|K(\theta, t,s,X_t)|\leq L_1\int_0^t(1+|X_s|)\de K_s+L_2(1+|X_t|),
\end{align*}
where $L_1$ and $L_2$ are positive constants and $K_s$ is a nondecreasing right-continuous function, $0\leq K_t\leq K_0, K_0>0.$
\end{assumption}
Assumption \ref{exun} implies that all the probability measures $P_\theta^{(\varepsilon)},\theta\in\Theta,$ are equivalent (see Theorem 7.7 in \citet{LipSh01}).
 The asymptotic in this model is considered as $\ve\to 0$ and $0<T<\infty$ fixed.

We will also write $x(\theta) = x_t(\theta)$ to denote  the limiting dynamical system satisfying the integro-differential equation
$$
\frac{\de x_t}{\de t} = V(\theta, t, x_t) + \int_0^t K(\theta, t, s, x_s) \de s, \quad x_0.
$$

We assume that, for all $0\leq t\leq T$ and for each $\theta\in\Theta,$ the random element $X_t$ and $x_t(\theta)$ belong to $L_2(\mu).$ 

Let $x_t^{(1)} =\{ x_t^{(1)}(\theta^*),0\leq t\leq T\}$ be  the Gaussian process 
solution to
\begin{equation}
\label{eqx}
\de x_t^{(1)} = \left( V_x(\theta^*, t, x_t(\theta^*)) x_t^{(1)} + \int_0^t K_x(\theta_0, t, s, x_s(\theta^*)) x_s^{(1)} \de s\right)\de t + \de W_t, 
\end{equation}
$0 \leq t \leq T$, $x_0^{(1)} = 0$, where $V_x(\theta, t, x)=\frac{\partial}{\partial x} V(\theta, t, x)$ and $K_x(\theta, t, s, x)=\frac{\partial}{\partial x} K(\theta, t, s, x).$ The process $x_t^{(1)}$ plays a central role in the definition of the asymptotic distribution of the estimators in the theory of dynamical systems with small noise.
We need in addition the following assumptions. 
\begin{assumption}
\label{ass1}
The stochastic process $X$ is differentiable in $\ve$ at the point $\ve=0$ in the following sense: for all $\nu>0$
$$
\lim_{\ve\to 0} P_{\theta^*}^{(\varepsilon)}\left(  || \ve^{-1}(X-x) - x^{(1)}|| > \nu\right)=0
$$
where $x^{(1)} = \{ x^{(1)}_t, 0\leq t \leq T\}$ is from \eqref{eqx}. 
\end{assumption}

We further denote by $\dot x_t(\theta)$ the $q$-dimensional vector of partial derivatives of $x_t(\theta)$ with respect to $\theta_j$, $j=1, \ldots, q$, i.e., $\dot x_t(\theta)=(\frac{\partial}{\partial \theta_1} x_t(\theta),\ldots, \frac{\partial}{\partial \theta_q} x_t(\theta))^T$, and $\dot x_t(\theta^*)$ satisfies the systems of equations
\begin{align*}
\frac{\de \dot x_t(\theta^*)}{\de t}&=[V_x(\theta^*, t, x_t(\theta^*))  \dot x_t(\theta^*)+ \dot V(\theta^*, t, x_t(\theta^*))\\
&\quad+\int_0^t(\dot K(\theta^*, t, s, x_s(\theta^*))+K_x(\theta_0, t, s, x_s(\theta^*)) \dot x_s(\theta^*) )\de s]\de t,\quad \dot x_0(\theta^*)=0,
\end{align*}
where the point corresponds to the differentiation on $\theta;$ i.e.
$$\dot V(\theta, t, x_t(\theta))=\left(\frac{\partial}{\partial \theta_1}V(\theta, t, x_t(\theta)),...,\frac{\partial}{\partial \theta_q}V(\theta, t, x_t(\theta))\right)^T.$$

\begin{assumption}
\label{ass2}
The deterministic dynamical system $x_t(\theta)$ is differentiable in $\theta$ at the point $\theta^*$ in $L_2(\mu)$; i.e.
$$
||x(\theta^*+h) - x(\theta^*) - h^T  \dot x(\theta^*))|| = o(|h|)
$$
where $h\in \mathbb R^q$.
\end{assumption}
\begin{assumption}
\label{ass3}
The  matrix
$$\mathcal I(\theta^*) = \int_0^T \dot x_t(\theta^*) \dot x_t^T(\theta^*) \mu(\de t)$$
is positive definite and nonsingular.
\end{assumption}

\subsection{The Lasso-type estimator}


We introduce a constrained minimum distance estimator for $\theta$ for the model in \eqref{eq:mod1}.
The asymptotic properties of unconstrained the minimum distance estimators in the i.i.d. framework have been established in \citet{Millar83, Millar84}. Later \citet{Kuto91, Kuto94} and \citet{KutoPhil94}
 studied in details the properties of such estimators for diffusion processes with small noise. Information criteria for this model have been studied in \citet{UchidaYoshida04a}, while here we study the Lasso-type approach.

To define the Lasso-type estimator the following  penalized contrast function has to be considered
\begin{equation}
\label{eq:2}
Z_\ve (u) =  || X - x(u)|| +  \lambda_\ve \sum_{j=1}^q |u_j|^p,
\end{equation}
$p>0$,  $u \in \Theta$ and $\lambda_\ve>0$ a real sequence.
In analogy to \eqref{eq1}, we introduce the Lasso-type estimator $\hat\theta^\varepsilon:C[0,T]\to \bar\Theta$ for $\theta$, defined as 
\begin{equation}
\hat\theta^\ve = \arg \min_{\theta\in\bar\Theta}  Z_\ve(\theta),
\label{eq:est}
\end{equation}
where $\bar\Theta$ is the closure od $\Theta$.

The following example explains well the spirit of the Lasso procedure. We consider a linear small diffusion-type process $X$ given by
$$\de X_t = \sum_{j=1}^q \theta_j A_j(t,X)\de t + \ve  \de W_t,\quad 0\leq t\leq T.$$
By applying the estimator \eqref{eq:est}, some parameters $\theta_j$ will be set equal to 0 and this implies a simultaneous estimation and selection of the model.

\section{Asymptotic properties of the estimator}\label{sec2}
The additional $l_p$-penalization term in the contrast function \eqref{eq:2} modifies the traditional properties of the minimum distance estimator. The analysis should be performed for the different values of $p$ which change the convexity of the penalty term. 
\subsection{Consistency of the estimator}
Let us introduce the following functions

$$
g^\ve_{\theta^*}(\nu) = \inf_{|\theta-\theta^*|\geq \nu} \left\{||x(\theta)-x(\theta^*)|| + \lambda_\ve \sum_{j=1}^q |\theta_j|^p\right\},$$

$$
h^\ve_{\theta^*}(\nu) = \inf_{|\theta-\theta^*|<\nu} \left\{||x(\theta)-x(\theta^*)|| + \lambda_\ve \sum_{j=1}^q |\theta_j|^p\right\}$$ where $|\theta-\theta^*|\geq\nu(<\nu)$ is to be intended component wise, for all $\nu>0$. 
We need the following identifiability-type condition.
\begin{assumption}\label{sep}
For every $\nu>0,$ we assume that
$$g^\ve_{\theta^*}(\nu) >h^\ve_{\theta^*}(\nu).$$
\end{assumption} 

\begin{theorem}\label{cons}

Let Assumption \ref{exun} and Assumption \ref{sep} be fulfilled and $\lambda_\varepsilon=O(\varepsilon)$ as $\ve \to 0$. 
$\hat\theta^\ve$ in \eqref{eq:est} is a uniformly consistent estimator of $\theta^*$; i.e. for any $\nu>0$
$$\lim_{\ve\to 0}\sup_{\theta^*\in\Theta}P_{\theta^*}^{(\varepsilon)}\left(
 |\hat\theta^\ve-\theta^*|\geq \nu
\right)=0.$$

\end{theorem}
\begin{proof}
By definition of $\hat\theta^\ve,$ for any $\nu>0,$ we have that
$$
\left\{
\omega : |\hat\theta^\ve-\theta^*|\geq \nu
\right\} =
\left\{
\omega : \inf_{|\theta-\theta^*|< \nu} Z_\ve(\theta) > \inf_{|\theta-\theta^*|\geq \nu} Z_\ve(\theta)
\right\}
$$
Moreover,
$$
\begin{aligned}
Z_\ve(\theta)  & \leq
 || X - x(\theta^*)|| + || x(\theta) - x(\theta^*)|| + \lambda_\ve \sum_{j=1}^q |\theta_j|^p,\\
Z_\ve(\theta) &\geq 
 || x(\theta) - x(\theta^*)|| - || X - x(\theta^*)|| + \lambda_\ve \sum_{j=1}^q |\theta_j|^p.
\end{aligned}
$$
Then, from the above inequality, we get
$$
\begin{aligned}
P_{\theta^*}^{(\varepsilon)}\left(
 |\hat\theta^\ve-\theta^*|\geq \nu
\right)
&= 
P_{\theta^*}^{(\varepsilon)}\left(
\inf_{|\theta-\theta^*|< \nu} Z_\ve(\theta) > \inf_{|\theta-\theta^*|\geq \nu} Z_\ve(\theta)
\right) \\
&\leq
P_{\theta^*}^{(\varepsilon)}\biggl(|| X - x(\theta^*)|| +\frac{h^\ve_{\theta^*}(\nu)}{2}  >  \frac{g^\ve_{\theta^*}(\nu) }{2} \biggr) 
\end{aligned}
$$
Since (see Lemma 1.13, in \citet{Kuto94}) 
$$||X_t-x_t(\theta^*)||\leq C\varepsilon \sup_{0\leq t\leq T} |W_t|,\quad P_{\theta^*}^{(\varepsilon)}-\text{a.s.},$$
where $C=C(L_1,L_2,K_0,T)$ is a positive constant, under Assumption \ref{sep}, we get
$$
\begin{aligned}
\sup_{\theta^*\in\Theta}P_{\theta^*}^{(\varepsilon)}\left(
 |\hat\theta^\ve-\theta^*|\geq \nu
\right)&
\leq
P_{\theta^*}^{(\varepsilon)}\left(C \ve \sup_{0\leq t\leq T}|W_t| > 
 \frac12\inf_{\theta^*\in\Theta}\{g^\ve_{\theta^*}(\nu)-h^\ve_{\theta^*}(\nu)\}
\right)\\
&\leq2\exp\left\{- \frac{(\inf_{\theta^*\in\Theta}\{g^\ve_{\theta^*}(\nu)-h^\ve_{\theta^*}(\nu)\})^2}{8TC^2\ve^2}\right\} \to 0.
\end{aligned}
$$
In the above we made use of the following estimate for $N>0$
$$P\left(\sup_{0\leq t \leq T}|W_t| > N\right)\leq 4P\left(W_T> N\right)\leq 2e^{-\frac{N^2}{2T}},
$$
see e.g. \citet{Kuto94}, and observed that
$$g^\ve_{\theta^*}(\nu)-h^\ve_{\theta^*}(\nu)\to\inf_{|\theta-\theta^*|\geq \nu} ||x(\theta)-x(\theta^*)||>0 ,\quad \ve\to 0.$$
\end{proof}
From the proof of the consistency of the estimator $\hat\theta^\ve$ is it clear that the speed of the convergence depends on the speed of $\lambda_\ve$. The speed of $\lambda_\ve$ also affects the asymptotic distribution of the estimator.

\begin{remark}
It is possible to define other types of Lasso-type estimators modifying the metric in \eqref{eq:2}; i.e. by considering, for instance, the sup-norm and the $L_1$-norm. Hence, if $\{X_t,0\leq t\leq T\}$ and $\{x_t(\theta),0\leq t\leq T\},\theta\in\Theta,$ are elements of the space $C[0,T]$ and $L_1(\mu),$ respectively, we can introduce the corresponding Lasso estimator 
\begin{equation*}
\check\theta^\ve=\arg\min_{\theta\in\bar \Theta}\left\{\sup_{0\leq t\leq T}| X_t - x_t(\theta)| +  \lambda_\ve \sum_{j=1}^q |u_j|^p\right\}
\end{equation*}
and
\begin{equation*}
\breve\theta^\ve=\arg\min_{\theta\in\bar \Theta}\left\{\int_0^T| X_t - x_t(\theta)|\mu(\de t) +  \lambda_\ve \sum_{j=1}^q |u_j|^p\right\}.
\end{equation*}
The estimators $\check\theta^\ve$ and $\breve\theta^\ve$ are uniformly consistent and the proof follows by the same steps adopted to prove Theorem \ref{cons}.
\end{remark}

\subsection{Asymptotic distribution of the estimator}
In order to study the asymptotic distribution of the Lasso-type estimator we need to distinguish the different cases for $p$. We start with the case of  $p\geq 1$. We denote by ``$\to_d$'' the convergence in distribution and we denote by $\zeta$ the following Gaussian random vector
\begin{equation}
\label{eq:zeta}
\zeta =\int_0^T x^{(1)}_t (\theta^*)\dot x_t(\theta^*) \mu(\de t);
\end{equation}
i.e. $\zeta\sim N_q({\bf 0}, \sigma^2)$ where 
$$\sigma^2:=\int_0^T \int_0^T\dot x_t(\theta^*)\dot x_s(\theta^*)^T E [x^{(1)}_t (\theta^*)x^{(1)}_s (\theta^*)]\mu(\de t) \mu(\de s),$$
see also Lemma 2.13 in \citet{Kuto94}. The next two theorems have been inspired from Theorem 2 and Theorem 3 in \citet{Knight00}.

 \begin{theorem}
\label{eq:th2}
Let Assumptions \ref{exun}--\ref{sep} hold, $\zeta$ defined as in \eqref{eq:zeta},
 $p\geq 1$ and $\ve^{-1}\lambda_\ve\to\lambda_0\geq 0$. Then
$$
\ve^{-1}(\hat\theta^\ve - \theta^*) \to_d \arg\min_{u} V(u)
$$
where
$$V(u) = -2u^T \zeta + u^T \mathcal I(\theta^*) u +\lambda_0 \sum_{j=1}^q u_j {\rm sgn}(\theta_j^*)|\theta_j^*|^{p -1}
$$
for $p>1$ and
$$V(u) = -2u^T \zeta + u^T \mathcal I(\theta^*) u +
\lambda_0  \sum_{j=1}^q \left(|u_j| {\bf 1}_{\{\theta^*_j=0\}} +
u_j {\rm sgn}(\theta_j^*)|\theta_j^*| {\bf 1}_{\{\theta^*_j\neq 0\}}
\right)
$$
if $p=1$. 
\end{theorem}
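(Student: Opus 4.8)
The plan is to transport the reparametrization argument of Knight and Fu (2000) to the minimum distance setting of Kutoyants (1994, Chapter~7). Writing $u = \ve^{-1}(\theta-\theta^*)$ and working with the squared $L_2$ distance (as the quadratic form $u^T\mathcal I(\theta^*)u$ in $V$ dictates), I would set
\[
V_\ve(u) = \ve^{-2}\left\{ Z_\ve(\theta^*+\ve u) - Z_\ve(\theta^*)\right\}.
\]
Since $\theta\mapsto u$ is affine and neither subtracting the $u$-free constant $Z_\ve(\theta^*)$ nor multiplying by $\ve^{-2}>0$ moves the location of the minimum, one has $\ve^{-1}(\hat\theta^\ve-\theta^*) = \arg\min_u V_\ve(u)$. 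The theorem then reduces to identifying the limit of $V_\ve$ and transferring that convergence to the minimizers.

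Next I would expand the two pieces of $V_\ve$ separately. For the distance term I would write $X - x(\theta^*+\ve u) = (X - x(\theta^*)) - (x(\theta^*+\ve u) - x(\theta^*))$, square, and divide by $\ve^2$: the cross term produces $-2\langle \ve^{-1}(X - x(\theta^*)),\, u^T\dot x(\theta^*)\rangle$ and the quadratic term produces $\|u^T\dot x(\theta^*)\|^2$. Assumption~\ref{ass1} replaces $\ve^{-1}(X - x(\theta^*))$ by the Gaussian process $x^{(1)}$, Assumption~\ref{ass2} justifies the linearization $x(\theta^*+\ve u) - x(\theta^*) = \ve\, u^T\dot x(\theta^*) + o(\ve)$, and Cauchy--Schwarz absorbs the $o(\ve)$ remainders; this yields the limit $-2u^T\zeta + u^T\mathcal I(\theta^*) u$, with $\zeta = \int_0^T x^{(1)}_t \dot x_t(\theta^*)\mu(\de t)$ Gaussian as a linear functional of $x^{(1)}$. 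For the penalty term $\ve^{-2}\lambda_\ve\sum_j(|\theta_j^*+\ve u_j|^\gamma - |\theta_j^*|^\gamma)$ I would use $\ve^{-1}\lambda_\ve\to\lambda_0$ and split into regimes. For $\gamma>1$ a Taylor expansion gives the linear limit $\lambda_0\sum_j u_j\,{\rm sgn}(\theta_j^*)|\theta_j^*|^{\gamma-1}$, the coordinates with $\theta_j^*=0$ contributing a term of order $\ve^{\gamma-1}\to 0$. For $\gamma=1$ the map $|\cdot|$ is differentiable away from the origin, so coordinates with $\theta_j^*\neq 0$ again give a linear contribution while the exact identity $|\ve u_j| = \ve|u_j|$ turns each $\theta_j^*=0$ coordinate into the nonsmooth term $\lambda_0|u_j|$. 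This reproduces exactly the two announced forms of $V(u)$.

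Finally I would upgrade the finite-dimensional convergence in distribution $V_\ve(u)\to_d V(u)$ (the randomness entering only through the Gaussian $\zeta$, the penalty limit being deterministic) to convergence of the minimizers. In both cases $V$ is strictly convex in $u$: the quadratic part is strictly convex by Assumption~\ref{ass3}, and the penalty limit is convex for $\gamma\ge 1$; hence $V$ has an almost surely unique minimizer and is coercive, $V(u)\to\infty$ as $|u|\to\infty$. Together with the consistency proved in Theorem~1 this gives tightness of $\ve^{-1}(\hat\theta^\ve-\theta^*)$, and I would then invoke the argmin continuous-mapping principle for (asymptotically) convex criteria --- the convexity lemma that promotes pointwise to locally uniform convergence, as used by Knight and Fu (2000) and Geyer (1994) --- to conclude $\arg\min_u V_\ve(u)\to_d\arg\min_u V(u)$.

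I expect the main obstacle to be precisely this last transfer step. Unlike the linear regression of Knight and Fu, here $x(\theta)$ is nonlinear in $\theta$, so the finite-$\ve$ criterion $V_\ve$ is not exactly convex; one must localize around $\theta^*$ using the consistency of Theorem~1 and show that the rescaled criterion converges to the convex limit $V$ uniformly on compact $u$-sets, so that the minimizer cannot escape to infinity. Secondly, at $\gamma=1$ the penalty is nonsmooth at the coordinates with $\theta_j^*=0$, so the argument must be carried out with the limiting nonsmooth convex $V$ itself rather than through a first-order stationarity condition; it is the Knight--Fu decomposition into vanishing and nonvanishing coordinates that keeps this case under control.
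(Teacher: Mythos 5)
Your proposal follows essentially the same route as the paper's own proof: the same rescaled criterion $V_\ve(u)$ built from the squared distance, the same expansion via Assumptions \ref{ass1}--\ref{ass3} yielding the limit $-2u^T\zeta + u^T\mathcal{I}(\theta^*)u$, the same case analysis of the penalty term for $\gamma>1$ and $\gamma=1$ (split over $\theta_j^*=0$ versus $\theta_j^*\neq 0$), and the same convexity-based argmin convergence citing Pollard/Geyer/Knight--Fu. Your closing remark that $V_\ve$ is only asymptotically convex because $x(\theta)$ is nonlinear in $\theta$ is well taken --- the paper itself asserts convexity of $V_\ve$ only up to an $o_\ve(1)$ term --- but addressing it by localization and uniform convergence on compacts is a refinement of, not a departure from, the published argument.
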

\begin{proof}
Let $u\in\mathbb R^q$ and introduce the random function
\begin{equation}\label{eq:V}
V_\ve(u) =\frac{1}{\ve^2} \left(||X - x(\theta^*+\ve u)||^2 - ||X-x(\theta^*)||^2 + 
\lambda_\ve \sum_{j=1}^q \left\{ |\theta_j^* + \ve u_j|^p - |\theta_j^*|^p\right\}\right),
\end{equation}
which is minimized at the point $u=\ve^{-1}(\hat\theta^\ve-\theta^*)$ by definition of $\hat\theta^\ve$.
By exploiting Assumption \ref{ass1}--\ref{ass3}, we get
\begin{align}\label{eq:conv1}
&\frac{1}{\ve^2} \left\{||X - x(\theta^*+\ve u)||^2 - ||X-x(\theta^*)||^2\right\}\notag	\\
&=\frac{1}{\ve^2} \left\{||X - x(\theta^*) - \ve u^T \dot x(\theta^*)||^2 - ||X-x(\theta^*)||^2\right\} + o_\ve(1)\notag\\
&=u^T ||\dot x(\theta^*)||^2 u -2 u^T ||\ve^{-1}(X-x(\theta^*)) \dot x(\theta^*)||+ o_\ve(1)\notag\\
&\underset{\ve\to 0}{\stackrel{P_{\theta^*}^{(\ve)}}{\longrightarrow}} u^T \mathcal I(\theta^*) u -2 u^T \zeta, 
\end{align}
where $\stackrel{P_{\theta^*}^{(\ve)}}{\longrightarrow}$ stands for the convergence in probability and $\zeta$ is from \eqref{eq:zeta}.
For the term in \eqref{eq:V}
\begin{align*}
\frac{\lambda_\ve}{\ve^2}  \sum_{j=1}^q \left\{ |\theta_j^* + \ve u_j|^p - |\theta_j^*|^p\right\}
\end{align*}
we have to distinguish the case $p=1$ and $p>1$.
Let $\gamma>1$, then
\begin{align}\label{eq:conv2}
&\frac{\lambda_\ve}{\ve^2}  \sum_{j=1}^q \left\{ |\theta_j^* + \ve u_j|^p - |\theta_j^*|^p\right\}\notag\\
&=
\frac{\lambda_\ve}{\ve}  \sum_{j=1}^q u_j\frac{ |\theta_j^* + \ve u_j|^p - |\theta_j^*|^p}{\ve u_j}
\underset{\ve\to 0}{\longrightarrow} \lambda_0  \sum_{j=1}^p u_j {\rm sgn}(\theta^*_j) |\theta_j^*|^{p-1}
\end{align}
If $p=1$, then by similar arguments, we have
\begin{align}\label{eq:conv3}
\frac{\lambda_\ve}{\ve^2}  \sum_{j=1}^q \left\{ |\theta_j^* + \ve u_j| - |\theta_j^*|\right\}\underset{\ve\to 0}{\longrightarrow}
\lambda_0  \sum_{j=1}^q \left(|u_j| {\bf 1}_{\{\theta^*_j=0\}} +
u_j {\rm sgn}(\theta_j^*)|\theta_j^*| {\bf 1}_{\{\theta^*_j\neq 0\}}
\right).
\end{align}
Notice that $V_\ve(u)$ is not convex in $u$ and then we have to consider the convergence in distribution on the topology induced by the uniform metric on compact sets; i.e. we deal with the convergence in distribution of $V_\ve(u)$ on the space of the continuous functions topologized by the distance $\rho(y_1,y_2)=\sup_{u\in K}|y_1(u)-y_2(u)|,$ where $K$ is a compact subset of $\mathbb R^d.$ From \eqref{eq:conv1}, \eqref{eq:conv2} and \eqref{eq:conv3} follows the convergence of the finite-dimensional distributions
$$(V_\ve(u_1),...,V_\ve(u_k))\to_d(V(u_1),...,V(u_k))$$
for any $u_i\in\mathbb R^d,i=1,...,k.$ The tightness of $V_\ve(u)$ is implied by  
$$\sup_{\ve\in(0,1]}E \left[\sup_{u\in K}\left|\frac{\de}{\de u} V_\ve (u)\right|\right]<\infty$$
which follows from the regularity conditions on $\{x_t(\theta),0\leq t\leq T\}.$ Indeed it is not hard to prove that
$$\lim_{h\to 0}\limsup_{\ve\to 0}E[w(V_\ve(u),h)\wedge1]\leq \lim_{h\to 0}h \sup_{\ve\in(0,1]}E \left[\sup_{u\in K}\left|\frac{\de}{\de u} V_\ve (u)\right|\right]=0,$$
where $w(y,h)=\sup\{\rho(y(u),y(v)):|u-v|\leq h\},$ with $y$ continuous function on compact sets and $h>0.$ Therefore by Theorem 16.5 in \citet{Kall01}, we conclude that
$$V_\ve(u)\to_d V(u)$$
uniformly on $u.$
Since $\arg\min_u V(u)$ is unique ($P_{\theta^*}^{(\ve)}-$a.s.), to prove that
$$
\arg\min V_\ve =\ve^{-1}(\hat\theta^\ve - \theta^*) \to_d \arg\min V,
$$
we can use Theorem 2.7 in \citet{KimPollard90}. Hence, it is sufficient to show that $\arg\min_uV_\ve(u)=O_{P_{\theta^*}^{(\ve)}}(1).$ We observe that
\begin{align*}
V_\ve(u)&=V_\ve^l(u)+o_\ve(1) \end{align*}
where 
\begin{align*}
V_\ve^l(u)&=\frac{1}{\ve^2} \Bigg\{u^T ||\dot x(\theta^*)||^2 u -2 u^T ||\ve^{-1}(X-x(\theta^*)) \dot x(\theta^*)||\\
&\quad+
\lambda_\ve \sum_{j=1}^q \left\{ |\theta_j^* + \ve u_j|^p - |\theta_j^*|^p\right\}\Bigg\} \end{align*}
 is a convex function. Since for each $a\in\mathbb R$ and $\delta>0,$ there exists a compact set $K_{a,\delta}$ such that (see, \citet{Knight99})
 $$\limsup_{\ve\to 0}P_{\theta^*}^{(\ve)}\left(\inf_{u\notin K_{a,\delta}} V_\ve(u)\leq a	\right)\leq \delta,$$
 then $\arg\min_uV_\ve(u)=O_{P_{\theta^*}^{(\ve)}}(1).$ 
\end{proof}

In the case $0<p<1$  the convexity argument cannot be applied, moreover, some rate of convergence must be imposed on  the sequence $\lambda_\ve$.

\begin{theorem}\label{th3}
Let Assumptions \ref{exun}--\ref{ass3} hold, $\zeta$ defined as in \eqref{eq:zeta},
 $0<p<1$ and $\lambda_\ve/\ve^{2-p} \to\lambda_0\geq 0$. Then
$$
\ve^{-1}(\hat\theta^\ve - \theta^*) \to_d \arg\min_{u} V(u)
$$
where
$$V(u) = -2u^T \zeta + u^T \mathcal I(\theta^*) u +
\lambda_0  \sum_{j=1}^q |u_j|^p {\bf 1}_{\{\theta^*_j=0\}}.
$$
\end{theorem}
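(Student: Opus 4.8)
The plan is to follow the strategy of Theorem~\ref{eq:th2}: study the rescaled objective $V_\ve(u)$ of \eqref{eq:V}, which by definition of $\hat\theta^\ve$ is minimised at $\hat u_\ve:=\ve^{-1}(\hat\theta^\ve-\theta^*)$, show $V_\ve\to V$, and transfer this to the minimisers. The quadratic part needs no new argument: by Assumptions~\ref{ass1}--\ref{ass3} and the Taylor expansion already used in Theorem~\ref{eq:th2},
$$\frac{1}{\ve^2}\left(\|X-x(\theta^*+\ve u)\|^2-\|X-x(\theta^*)\|^2\right)\underset{\ve\to 0}{\longrightarrow}u^T\mathcal I(\theta^*)u-2u^T\zeta,$$
the stochastic part $\ve^{-1}\langle X-x(\theta^*),\dot x(\theta^*)\rangle$ converging in $P_{\theta^*}$ to $\zeta$ through Assumption~\ref{ass1}. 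Only the penalty is $\gamma$-dependent, and this is where the regimes differ.

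For the penalty I would split the indices according to whether $\theta_j^*$ vanishes. If $\theta_j^*=0$ then $|\theta_j^*+\ve u_j|^\gamma-|\theta_j^*|^\gamma=\ve^\gamma|u_j|^\gamma$, so the $j$-th contribution equals $\lambda_\ve\ve^{\gamma-2}|u_j|^\gamma$, which tends to $\lambda_0|u_j|^\gamma$ by the rate assumption on $\lambda_\ve$; this is exactly the coordinate that survives. If $\theta_j^*\neq 0$ then $|t|^\gamma$ is differentiable at $t=\theta_j^*$, whence $|\theta_j^*+\ve u_j|^\gamma-|\theta_j^*|^\gamma=\gamma\,{\rm sgn}(\theta_j^*)|\theta_j^*|^{\gamma-1}\ve u_j+o(\ve)$ and the $j$-th contribution is $O(\lambda_\ve\ve^{-1})\to 0$. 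Summing, the penalty converges to $\lambda_0\sum_{j}|u_j|^\gamma{\bf 1}_{\{\theta_j^*=0\}}$, so that $V_\ve(u)\to V(u)$ for each fixed $u$, jointly through the single Gaussian vector $\zeta$.

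The genuinely new difficulty is that for $0<\gamma<1$ the maps $u_j\mapsto|u_j|^\gamma$ are concave on each half-line, so $V_\ve$ is not convex and the convexity device of Pollard (1991) and Geyer (1994) used in Theorem~\ref{eq:th2} is no longer available. I would instead pass to the minimisers through the argmin continuous-mapping theorem for possibly non-convex criteria, i.e. epi-convergence in distribution (Geyer, 1994) and the argmin theorem of Kim and Pollard (1990), which requires two further ingredients. First, the pointwise convergence above can be strengthened to uniform convergence on compact sets: the quadratic part converges uniformly on compacts because its coefficients converge and it is jointly continuous in $u$, and the penalty part is deterministic, continuous and converges uniformly on compacts by the same elementary estimates. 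Second, the limit $V$ must possess an almost surely unique minimiser; since $\mathcal I(\theta^*)$ is positive definite (Assumption~\ref{ass3}) the quadratic $u^T\mathcal I(\theta^*)u-2u^T\zeta$ is strictly convex and coercive, the added penalty is continuous and nonnegative, and $\zeta$ has an absolutely continuous (Gaussian) law, which excludes ties on a set of positive probability.

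The decisive step, which replaces convexity, is the tightness of the minimisers. From $V_\ve(\hat u_\ve)\le V_\ve(0)=0$ one confines $\hat u_\ve$ to the random sublevel set $\{u:V_\ve(u)\le 0\}$; for large $|u|$ the quadratic term grows like $c|u|^2$ with $c>0$ determined by the smallest eigenvalue of $\mathcal I(\theta^*)$, the linear term is $O_P(|u|)$ through $\zeta$, and the penalty is nonnegative on the surviving coordinates and of lower order on the others, so the sublevel set lies, with probability tending to one, in a fixed ball and $\hat u_\ve=O_P(1)$. I expect this coercivity estimate --- making the domination of the quadratic over the sublinear penalty uniform in $\ve$, while controlling the $\{\theta_j^*\neq 0\}$ penalty terms that vanish only in the limit --- to be the main obstacle, precisely because the non-convexity forbids the one-line conclusion of Theorem~\ref{eq:th2}. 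With uniform-on-compacts convergence, tightness, and a unique limiting minimiser established, the argmin theorem gives $\ve^{-1}(\hat\theta^\ve-\theta^*)\to_d\arg\min_u V(u)$.
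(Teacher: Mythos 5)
Your proposal mirrors the paper's proof almost step for step: you work with the same localized objective $V_\ve$ from \eqref{eq:V}, obtain the same pointwise limits (the quadratic part exactly as in Theorem \ref{eq:th2}; the penalty split according to $\theta_j^*=0$ versus $\theta_j^*\neq 0$, with the same reading of the rate condition under which $\lambda_\ve \ve^{\gamma-2}\to\lambda_0$ and $\lambda_\ve/\ve\to 0$ --- a reading the paper itself relies on), and you replace the convexity device of Theorem \ref{eq:th2} by exactly the tool the paper uses, namely the argmin theorem of Kim and Pollard (1990), which requires tightness $\arg\min V_\ve=O_P(1)$ together with uniqueness of $\arg\min V$. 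Your discussion of almost-sure uniqueness of the limiting minimiser (strict convexity and coercivity of the quadratic plus the absolutely continuous law of $\zeta$) is in fact more explicit than the paper's, which simply asserts uniqueness.

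The one step you leave open is precisely the one you flag as ``the main obstacle'': a bound on the penalty terms with $\theta_j^*\neq 0$ that is uniform in $u$, not merely pointwise. Saying those terms are ``of lower order'' is a fixed-$u$ statement; in a sublevel-set argument $u$ may be of order $\ve^{-1}$, where the linearization of $|\theta_j^*+\ve u_j|^\gamma-|\theta_j^*|^\gamma$ around $\theta_j^*$ gives no control. The paper closes this in one line using subadditivity of $t\mapsto|t|^\gamma$ for $0<\gamma<1$: for every coordinate, whether or not $\theta_j^*$ vanishes,
$$
|\theta_j^*+\ve u_j|^\gamma-|\theta_j^*|^\gamma\geq -|\ve u_j|^\gamma ,
$$
whence, for every $u$ and all sufficiently small $\ve$ and any $\delta>0$,
$$
V_\ve(u)\geq \frac{1}{\ve^2}\left(||X-x(\theta^*+\ve u)||^2-||X-x(\theta^*)||^2\right)-(\lambda_0+\delta)\sum_{j=1}^p|u_j|^\gamma = V_\ve^\delta(u),
$$
a minorant whose penalty no longer depends on $\ve$ and grows only like $|u|^\gamma$, so it is dominated by the quadratic term; thus $\arg\min V_\ve^\delta=O_P(1)$ and in turn $\arg\min V_\ve=O_P(1)$. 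With this inequality supplied, your argument coincides with the paper's; without it, the tightness step --- the only genuinely new ingredient relative to Theorem \ref{eq:th2} --- remains incomplete.
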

\begin{proof}
We proceed analogously to the proof of Theorem \ref{eq:th2}.
As before we start with $V_\ve(u)$ from \eqref{eq:V}.
The first part of the expression in $V_\ve(u)$  converges in distribution to $-2u^T \zeta + u^T \mathcal I(\theta^*) u$ 
as in Theorem \ref{eq:th2}. For the second term, we need to distinguish the two cases $\theta_k^*=0$ or $\theta_k^*\neq0$. By assumptions we have that $\lambda_\ve/\ve^{2-p} \to \lambda_0$ and hence necessarily $\lambda_\ve/\ve \to 0$. 

Consider first the case  $\theta_k^*\neq0$. We have that
$$
\frac{\lambda_\ve}{\ve} u_k \left( \frac{|\theta^*_k + \ve u_k|^p - |\theta_k^*|^p}{\ve u_k} \right)\to 0.
$$
Conversely, if  $\theta_k^*=0$ we have that
$$
\frac{\lambda_\ve}{\ve^2} \sum_{j=1}^q \left( |\theta^*_j + \ve u_j|^p - |\theta_j^*|^p\right) \to
\lambda_0  \sum_{j=1}^q |u_j|^p {\bf 1}_{\{\theta_j^*=0\}}
$$
So, by means of the same arguments adopted in the proof of Theorem \ref{eq:th2}, we can prove that $V_\ve(u) \to_d V(u)$ uniformly on $u$. Following \citet{KimPollard90}, the final step consists in showing that $\arg\min V_\ve = O_{P_{\theta^*}^{(\ve)}}(1)$ and so $\arg\min V_\ve \to_d \arg\min V$.
Indeed,
$$
V_\ve(u) \geq 
\frac{1}{\ve^2} \left(||X - x(\theta^*+\ve u)||^2 - ||X-x(\theta^*)||^2  \right)-
\frac{\lambda_\ve}{\ve^2}  \sum_{j=1}^q  |\ve u_j|^p
$$
and for all $u$ and $\ve$ sufficiently small, $\delta>0$, we have
$$
V_\ve(u) \geq 
\frac{1}{\ve^2} \left(||X - x(\theta^*+\ve u)||^2 - ||X-x(\theta^*)||^2  \right)-
(\lambda_0+\delta)  \sum_{j=1}^q  |u_j|^p=V_\ve^\delta(u).
$$
The term $|u_j|^p$ grows slower than the the first normed terms in  $V_\ve^\delta(u)$, so $\arg\min_u V_\ve^\delta(u) = O_{P_{\theta^*}^{(\ve)}}(1)$ and, in turn, $\arg\min_u V_\ve(u)$ is also $O_{P_{\theta^*}^{(\ve)}}(1)$. Since $\arg\min_u V(u)$ is unique, then the theorem is proved. 
\end{proof}

\begin{remark}
If $\lambda_0=0,$ from the above theorems we immediately obtain that
$$
\ve^{-1}(\hat\theta^\ve - \theta^*) \to_d\arg\min_u V(u)= \mathcal I^{-1} (\theta^*) \zeta,
$$
where $ \mathcal I^{-1} (\theta^*)\zeta\sim N_q({\bf 0},\mathcal I^{-1} (\theta^*) \sigma^2\mathcal I^{-1} (\theta^*)).$
\end{remark}

\section{Adaptive version of the penalized estimator}

Theorem \ref{th3} shows that, if $p<1$, one can estimate the nonzero parameters $\theta_j^*\neq 0$ at the usual rate without introducing asymptotic bias due to the penalization and, at the same time,  shrink the estimates of the null $\theta_j^*=0$ parameters toward zero with positive probability. 

On the contrary, if $p\geq 1$ non zero parameters are estimated with some asymptotic bias if $\lambda_0>0$. This is a well known result in the literature \citep{Zou06} and has been indeed considered in \citet{DegIac12} for ergodic diffusion models with discrete observations. In this section we consider only the case for $p=1$, i.e. the real Lasso estimator.

To state the results we need to rearrange the elements of the vector parameters $\theta$ in this way. Suppose that $q_0\leq q$ values of $\theta^*$ are not null, than we reorder $\theta^*$ as follows: $\theta^*=(\theta_1^*, \ldots, \theta_{q_0}^*,\theta_{q_0+1}^*, \ldots, \theta_{q}^*)^T$, where we denoted by $\theta_k^*=0$, $k=q_0+1, \ldots, q$, the null parameters.
We now need to modify the optimization function by introducing one adaptive sequence for each of the parameters $\theta_j;$ i.e.
\begin{equation}
\label{eq:21}
\tilde Z_\epsilon (u) =  ||X - x(u)|| +   \sum_{j=1}^q \lambda_{\ve,j} |u_j|,
\end{equation}
and, as in the above, 
the \textit{adaptive} Lasso-type estimator is the solution to
\begin{equation}
\tilde\theta^\ve =(\tilde\theta_1^\ve,...,\tilde\theta_q^\ve)= \arg \min_{\theta\in\Theta}  \tilde Z_\ve(\theta).
\label{eq:est21}
\end{equation}
We now need to slightly modify the rate of convergence of the new sequences $\{\lambda_{\ve,j}, j=1, \ldots, q\}$.
\begin{assumption}
\label{ass4}
Let 
$$\kappa_\ve = \min\limits_{j>q_0} \lambda_{\ve, j}\qquad \text{and} \qquad \gamma_\ve = \max\limits_{1\leq j \leq q_0} \lambda_{\ve, j}.$$ 
Then the following convergence must hold
$$\frac{\kappa_\ve}{\ve} \to \infty \qquad \text{and}\qquad  \frac{\gamma_\ve}{\ve} \to 0.$$
\end{assumption}

Let $$\dot x_t^1(\theta)=\left(\frac{\partial}{\partial \theta_1} x_t(\theta),\ldots, \frac{\partial}{\partial \theta_{q_0}} x_t(\theta)\right)^T,$$
and 
\begin{align*}&\mathcal I_{11}(\theta) =\int_0^T \dot x_t^1(\theta) \dot x_t^1(\theta)^T\mu(\de t),\quad (q_0\times 
   q_0 \,\,\text{matrix}).
  \end{align*}
  Let $\eta$ be a Gaussian random vector defined as follows
\begin{equation}
\label{eq:zeta2}
\eta=\int_0^T x^{(1)}_t (\theta^*)\dot x_t^1(\theta^*) \mu(\de t)\sim N_{q_0}({\bf 0},\sigma_1^2),
\end{equation}
where
$$\sigma_1^2=\int_0^T\int_0^T\dot x_t^1(\theta^*)\dot x_s^1(\theta^*)^T E [x^{(1)}_t (\theta^*)x^{(1)}_s (\theta^*)]\mu(\de t) \mu(\de s).$$

The estimator $\tilde\theta^\ve$ reaches asymptotically the oracle properties. Indeed, a good procedure should have the following (asymptotically) properties:
(i) consistently estimates null parameters as zero and vice versa;  i.e. identifies the right subset model;
(ii) has the optimal estimation rate and converges to a Gaussian random variable with covariance matrix of the true subset model.

\begin{theorem}[Oracle properties]
Let Assumptions \ref{exun}--\ref{ass4} hold. Then, as $\ve \to 0$, 
(i) \text{Consistency in variable selection}; i.e. $$P_{\theta^*}^{(\ve)}(\tilde\theta^\ve_k=0) \longrightarrow 1, \quad k=q_0+1, \ldots, q;$$
(ii) \text{Asymptotic normality}; i.e. $$\ve^{-1}(\tilde\theta_1^\ve-\theta_1^*,...,\tilde\theta_{q_0}^\ve-\theta_{q_0}^*)^T \longrightarrow _d \mathcal I_{11}^{-1} (\theta^*)\eta,$$
where $\mathcal I_{11}^{-1} (\theta^*) \eta\sim N_{q_0}({\bf 0},\mathcal I_{11}^{-1}  (\theta^*) \,\sigma_1^2\,\mathcal I_{11}^{-1} (\theta^*)).$
\end{theorem}
\begin{proof}
(i) We briefly outline the proof. The proof is by contradiction. Let us assume that for one $j=q_0+1, \ldots, q$ the adaptive-lasso estimator for $\theta_j^*=0$ is $\tilde\theta^\ve_j\neq 0$. By taking into account the Karush-Kuhn-Tucker (KKT) optimality conditions, we have 
$$
\frac{1}{\ve}\left.\frac{\partial}{\partial u_j} \tilde Z_\epsilon (u) \right|_{u=\tilde\theta^{\ve}} = 
\frac{1}{\ve}\left(\left.\frac{\partial}{\partial u_j}  || X - x(u)|| \right|_{u=\tilde\theta^{\ve}} \right)+ \frac{\lambda_{\ve,j}}{\ve}  {\rm sgn}(\tilde\theta^\ve_j)=0.
$$
The first term is $O_{P_{\theta^*}^{(\ve)}}(1)$ by Assumption \ref{ass1} and the fact that $\tilde\theta^\ve$ is the solution of \eqref{eq:est21}. For the second term we have that $\frac{\lambda_{\ve,j}}{\ve} \geq \frac{\kappa_\ve}{\ve} \to \infty$ by Assumption \ref{ass4}. 

(ii) Let \begin{align}\label{eq:Vad}
\tilde V_\ve(u)& =\frac{1}{\ve^2} \left(||X - x(\theta^*+\ve u)||^2 - ||X-x(\theta^*)||^2 + 
 \sum_{j=1}^q \lambda_{\ve,j}\left\{ |\theta_j^* + \ve u_j| - |\theta_j^*|\right\}\right)\notag\\
 &=u^T ||\dot x(\theta^*)||^2 u -2 u^T ||\ve^{-1}(X-x(\theta^*)) \dot x(\theta^*)||+ o_\ve(1)\notag\\
 &\quad+ \sum_{j=1}^q \frac{\lambda_{\ve,j}}{\ve}\left\{ \frac{|\theta_j^* + \ve u_j| - |\theta_j^*|}{\ve}\right\}
\end{align}
From Assumption \ref{ass4}, since $$u_j\frac{|\theta_j^* + \ve u_j| - |\theta_j^*|}{u_j\ve}\underset{\ve\to 0}{\longrightarrow} u_j sgn(\theta_j^*),$$ for $j=1,...,q_0,$ we have that
\begin{align*}
\sum_{j=1}^{q_0} \frac{\lambda_{\ve,j}}{\ve}\left\{ \frac{|\theta_j^* + \ve u_j| - |\theta_j^*|}{\ve}\right\}\leq \frac{\gamma_\ve}{\ve}\sum_{j=1}^{q_0} \left\{ u_j\frac{|\theta_j^* + \ve u_j| - |\theta_j^*|}{u_j\ve}\right\}\underset{\ve\to 0}{\longrightarrow} 0,
\end{align*}
while for $\theta_j^*=0, j=q_0+1,...,q,$ one has that $\sum_{j=q_0+1}^q \frac{\lambda_{\ve,j}}{\ve} |u_j|\underset{\ve\to 0}{\longrightarrow} \infty.$ Therefore, it is not possible to use the topology of the uniform converge on compact sets; nevertheless, we can define the convergence of $\tilde V_\ve$ via epi-convergence in distribution; i.e. from Lemma 4.1 in \citet{Geyer94}, follows that $\tilde V_\ve(u)\to_d \tilde V(u)$ for every $u,$ where
\begin{align*}
\tilde V(u)=\begin{cases}
u_1^T \mathcal I_{11}(\theta^*) u_1-2u_1^T \eta,& \text{if}\,\, u_{q_0+1}=...=u_q=0,\\
\infty,& \text{otherwise},
\end{cases}
\end{align*}
and $u_1=(u_1,...,u_{q_0})^T$ and the previous convergence is considered on the space of extended functions $\mathbb R^q\to[-\infty,+\infty]$ with a suitable metric. (da fissare meglio) For more details on the epi-convergence see \citet{Geyer94}, \citet{Knight99} and \citet{Rock98}.
Since the unique minimum point of $\tilde V_\ve(u)$ is given by $\ve^{-1}(\tilde \theta^\ve-\theta^*)$ and $\arg\min_u \tilde V(u)=(\mathcal I_{11}^{-1}(\theta^*)\eta,{\bf 0})^T$ is $P_{\theta^*}-$unique, from Theorem 4.4 in \citet{Geyer94} follows the result (ii).

\end{proof}
Now let $\tilde\theta^\ve$ be any consistent estimator of $\theta^*$, for example, the unconstrained minimum distance estimator or the maximum likelihood estimator \citep{Kuto94}. Then, as suggested by \citet{Zou06}, for any constant $\lambda_0>0$ and $\delta>1$, it is sufficient to choose the sequences $\lambda_{\ve, j}$ as follows
\begin{equation}
\lambda_{\ve, j} =\frac{ \lambda_0 }{|\tilde \theta^\ve|^\delta}.
\label{eq:adap}
\end{equation}
If $\lambda_0/\ve \to 0$ and $\ve^{\delta-1}\lambda_0\to \infty$ as $\ve\to 0$, then Assumption \ref{ass4} is satisfied. Usually values of $\delta=1.5$ or $\delta=2$ are common in adaptive Lasso estimation.
The idea of weighting the sequences as in \eqref{eq:adap} is to exploit the ability of consistent estimators to give an initial guess of how large is a parameter, and then using Lasso to shrink adaptively the penalty function in order to avoid bias for true large parameters.

\bibliography{bibliography}

\begin{thebibliography}{45}
\providecommand{\natexlab}[1]{#1}
\providecommand{\url}[1]{\texttt{#1}}
\expandafter\ifx\csname urlstyle\endcsname\relax
  \providecommand{\doi}[1]{doi: #1}\else
  \providecommand{\doi}{doi: \begingroup \urlstyle{rm}\Url}\fi

\bibitem[Azencott(1982)]{Azencott82}
R.~Azencott.
\newblock Formule de taylor stochastique et d\'eveloppement asymptotique
  d\'int\'egrales de feynmann.
\newblock \emph{S\'eminaire de Probabilit\'es XVI; Suppl\'ement: G\'eom ́etrie
  Diff\'erentielle Stochastique. Lecture Notes In Math.}, 921:\penalty0
  237--285, 1982.

\bibitem[Freidlin and Wentzell(1998)]{FreidlinWentzell98}
M.~I. Freidlin and A.~D. Wentzell.
\newblock \emph{Random perturbations of dynamical systems. 2nd. ed.}
\newblock Springer-Verlag, New York, 1998.

\bibitem[Yoshida(1992{\natexlab{a}})]{Yoshida92b}
N.~Yoshida.
\newblock Asymptotic expansion for statistics related to small diffusions.
\newblock \emph{Journal of the Japan Statistical Society}, 22:\penalty0
  139--159, 1992{\natexlab{a}}.

\bibitem[Kunitomo and Takahashi(2001)]{Kunitomo01}
N.~Kunitomo and A.~Takahashi.
\newblock The asymptotic expansion approach to the valuation of interest rate
  contingent claims.
\newblock \emph{Mathematical Finance}, 11\penalty0 (1):\penalty0 117--151,
  2001.

\bibitem[Takahashi and Yoshida(2004)]{TakaYoshi04}
A.~Takahashi and N.~Yoshida.
\newblock An asymptotic expansion scheme for optimal investment problems.
\newblock \emph{Stat. Inference Stoch. Process.}, 7:\penalty0 153--188, 2004.

\bibitem[Uchida and Yoshida(2004{\natexlab{a}})]{UchidaYoshida04}
M.~Uchida and N.~Yoshida.
\newblock Asymptotic expansion for small diffusions applied to option pricing.
\newblock \emph{Statist. Infer. Stochast. Process.}, 7:\penalty0 189--223,
  2004{\natexlab{a}}.

\bibitem[Murray(2002)]{Murray02}
J.~D. Murray.
\newblock \emph{Mathematical Biology I, an introduction}.
\newblock Springer, New York, 2002.

\bibitem[Bressloff(2014)]{Bressloff14}
P.~C. Bressloff.
\newblock \emph{Stochastic Processes in Cell Biology, Interdisciplinary Applied
  Mathematics 41}.
\newblock Springer, New York, 2014.

\bibitem[Ermentrout and Terman(2010)]{Ermentrout10}
G.~B. Ermentrout and D.~H. Terman.
\newblock \emph{Mathematical Foundations of Neurosciences, Interdisciplinary
  Applied Mathematics 35}.
\newblock Springer, New York, 2010.

\bibitem[Caner(2009)]{Caner09}
M.~Caner.
\newblock Lasso-type gmm estimator.
\newblock \emph{Econometric Theory}, 25:\penalty0 270--290, 2009.

\bibitem[{Fan} and {Li}(2006)]{FanLi06}
J.~{Fan} and R.~{Li}.
\newblock {Statistical Challenges with High Dimensionality: Feature Selection
  in Knowledge Discovery}.
\newblock \emph{ArXiv Mathematics e-prints}, February 2006.

\bibitem[Knight and Fu(2000)]{Knight00}
K.~Knight and W.~Fu.
\newblock Asymptotics for lasso-type estimators.
\newblock \emph{The Annals of Statistics}, 5\penalty0 (28):\penalty0
  1536--1378, 2000.

\bibitem[Efron et~al.(2004)Efron, Hastie, Johnstone, and Tibshirani]{Efron}
B.~Efron, T.~Hastie, I.~Johnstone, and R.~Tibshirani.
\newblock Least angle regression.
\newblock \emph{The Annals of Statistics}, 32:\penalty0 407--489, 2004.

\bibitem[Tibshirani(1996)]{Tib96}
R.~Tibshirani.
\newblock Regression shrinkage and selection via the lasso.
\newblock \emph{J. Roy. Statist. Soc. Ser. B}, 58:\penalty0 267--288, 1996.

\bibitem[Kutoyants(1984)]{Kuto84}
Y.~Kutoyants.
\newblock \emph{Parameter estimation for stochastic processes}.
\newblock Heldermann, Berlin, 1984.

\bibitem[Kutoyants(1991)]{Kuto91}
Y.~Kutoyants.
\newblock Minimum distance parameter estimation for diffusion type
  observations.
\newblock \emph{C.R. Acad. Paris, S\'er. I}, 312:\penalty0 637--642, 1991.

\bibitem[Kutoyants(1994)]{Kuto94}
Y.~Kutoyants.
\newblock \emph{Identification of Dynamical Systems with Small Noise}.
\newblock Kluwer Academic Publishers, Dordrecht, The Netherlands, 1994.

\bibitem[Yoshida(1992{\natexlab{b}})]{Yoshida92a}
N.~Yoshida.
\newblock Asymptotic expansion of maximum likelihood estimators for small
  diffusions via the theory of malliavin-watanabe.
\newblock \emph{Probab. Theory Relat. Fields}, 92:\penalty0 275--311,
  1992{\natexlab{b}}.

\bibitem[Kutoyants and Philibossian(1994)]{KutoPhil94}
Y.~Kutoyants and P.~Philibossian.
\newblock On minimum $l_1$-norm estimates of the parameter of
  ornstein-uhlenbeck process.
\newblock \emph{Statistics and Probability Letters}, 20:\penalty0 117--123,
  1994.

\bibitem[Iacus(2000)]{Iacus00}
S.~M. Iacus.
\newblock Semiparametric estimation of the state of a dynamical system with
  small noise.
\newblock \emph{Statistical Inference for Stochastic Processes}, 3:\penalty0
  277--288, 2000.

\bibitem[Iacus and Kutoyants(2001)]{IacusKuto01}
S.~M. Iacus and Yu. Kutoyants.
\newblock Semiparametric hypotheses testing for dynamical systems with small
  noise.
\newblock \emph{Statistical Inference for Stochastic Processes}, 10:\penalty0
  105--120, 2001.

\bibitem[Yoshida(2003)]{Yoshida03}
N.~Yoshida.
\newblock Conditional expansions and their applications.
\newblock \emph{Stochastic Process. Appl.}, 107:\penalty0 53--81, 2003.

\bibitem[Uchida and Yoshida(2004{\natexlab{b}})]{UchidaYoshida04a}
M.~Uchida and N.~Yoshida.
\newblock Information criteria for small diffusions via the theory of
  malliavin-watanabe.
\newblock \emph{Statist. Infer. Stochast. Process.}, 7:\penalty0 35--67,
  2004{\natexlab{b}}.

\bibitem[Genon-Catalot(1990)]{GenonCatalot90}
V.~Genon-Catalot.
\newblock Maximum contrast estimation for diffusion processes from discrete
  observations.
\newblock \emph{Statistics}, 21:\penalty0 99--116, 1990.

\bibitem[Laredo(1990)]{Laredo90}
C.~F. Laredo.
\newblock A sufficient condition for asymptotic sufficiency of incomplete
  observations of a diffusion process.
\newblock \emph{Ann. Statist.}, 18:\penalty0 1158--1171, 1990.

\bibitem[S{\o}rensen(1997)]{Sorensen00}
M.~S{\o}rensen.
\newblock Small dispersion asymptotics for diffusion martingale estimating
  functions.
\newblock \emph{Department of Statistics and Operations Research, University of
  Copenhagen}, Preprint No. 2000-2, 1997.
\newblock URL \url{{http://www.math.ku.dk/ michael/smalld.pdf}}.

\bibitem[S{\o}rensen(2012)]{Sorensen12}
M~S{\o}rensen.
\newblock Estimating functions for diffusion-type processes.
\newblock In M.~Kessler, A.~Lindner, and M.~Sørensen, editors,
  \emph{Statistical Methods for Stochastic Differential Equations}, Proceedings
  of the Second International Symposium on Information Theory, pages 1--107.
  CRC Press, Chapmann and Hall, 2012.

\bibitem[S{\o}rensen and Uchida(2003)]{SorenUchida03}
M.~S{\o}rensen and M.~Uchida.
\newblock Small diffusion asymptotics for discretely sampled stochastic
  differential equations.
\newblock \emph{Bernoulli}, 9:\penalty0 1051--1069, 2003.

\bibitem[Uchida(2003)]{Uchida03}
M.~Uchida.
\newblock Estimation for dynamical systems with small noise from discrete
  observations.
\newblock \emph{J. Japan Statist. Soc.}, 33:\penalty0 157--167, 2003.

\bibitem[Uchida(2004)]{Uchida04}
M.~Uchida.
\newblock Estimation for discretely observed small diffusions based on
  approximate martingale estimating functions.
\newblock \emph{Scand. J. Statist.}, 31:\penalty0 553--566, 2004.

\bibitem[Uchida(2006)]{Uchida06}
M.~Uchida.
\newblock Martingale estimating functions based on eigenfunctions for
  discretely observed small diffusions.
\newblock \emph{Bull. Inform. Cybernet.}, 38:\penalty0 1--13, 2006.

\bibitem[Uchida(2008)]{Uchida08}
M.~Uchida.
\newblock Approximate martingale estimating functions for stochastic
  differential equations with small noises.
\newblock \emph{Stochastic Processes and their Applications}, 118:\penalty0
  1706--1721, 2008.

\bibitem[Gloter and S{\o}rensen(2009)]{GloterSorensen09}
A.~Gloter and M~S{\o}rensen.
\newblock Estimation for stochastic differential equations with a small
  diffusion coefficient.
\newblock \emph{Stochastic Processes and their Applications}, 119:\penalty0
  679--699, 2009.

\bibitem[Guy et~al.(2014)Guy, Laredo, and Vergu]{Guy14}
R.~Guy, C.~Laredo, and E.~Vergu.
\newblock Parametric inference for discretely observed multidimensional
  diffusions with small diffusion coefficient.
\newblock \emph{Stochastic Processes and their Applications}, 124:\penalty0
  51--80, 2014.

\bibitem[De~Gregorio and Iacus(2012)]{DegIac12}
A.~De~Gregorio and S.~M. Iacus.
\newblock Adaptive lasso-type estimation for multivariate diffusion processes.
\newblock \emph{Econometric Theory}, 28:\penalty0 838--860, 8 2012.
\newblock ISSN 1469-4360.
\newblock \doi{10.1017/S0266466611000806}.
\newblock URL \url{http://journals.cambridge.org/article_S0266466611000806}.

\bibitem[Nkurunziza(2012)]{Nkurunziza12}
S.~Nkurunziza.
\newblock Shrinkage strategies in some multiple multi-factor dynamical systems.
\newblock \emph{ESAIM: Probability and Statistics}, 16:\penalty0 139--150, 1
  2012.
\newblock ISSN 1262-3318.
\newblock \doi{10.1051/ps/2010015}.
\newblock URL \url{http://www.esaim-ps.org/article_S1292810010000157}.

\bibitem[Lipster and Shiryaev(2001)]{LipSh01}
R.S. Lipster and A.N. Shiryaev.
\newblock \emph{Statistics for Random Processes I: General Theory}.
\newblock Springer-Verlag, New York, 2001.

\bibitem[Millar(1983)]{Millar83}
P.W. Millar.
\newblock The minimax principle in asymptotic statistical theory.
\newblock \emph{Lect. Notes in Math.}, 976:\penalty0 76---265, 1983.

\bibitem[Millar(1984)]{Millar84}
P.W. Millar.
\newblock A general approach to the optimality of the minimum distance
  estimators.
\newblock \emph{Trans. Amer. Math. Soc.}, 286:\penalty0 377--418, 1984.

\bibitem[Kallenberg(2001)]{Kall01}
O.~Kallenberg.
\newblock \emph{Foundations of Modern Probability}.
\newblock Springer-Verlag, New York, 2001.

\bibitem[Kim and Pollard(1990)]{KimPollard90}
J.~Kim and D.~Pollard.
\newblock Cube root asymptotics.
\newblock \emph{Annals of Statistics}, 18:\penalty0 191--219, 1990.

\bibitem[Knight(1999)]{Knight99}
K.~Knight.
\newblock Epi-convergence in distribution and stochastic equi-semicontinuity.
\newblock \emph{Unpublished manuscript}, 1999.

\bibitem[Zou(2006)]{Zou06}
H.~Zou.
\newblock The adaptive lasso and its oracle properties.
\newblock \emph{Journal of the American Statistical Association}, 101\penalty0
  (476):\penalty0 1418--1429, 2006.

\bibitem[Geyer(1994)]{Geyer94}
C.J. Geyer.
\newblock On the asymptotics of constrained $m$-estimation.
\newblock \emph{Annals of Statistics}, 22:\penalty0 1993--2010, 1994.

\bibitem[Rockafellar and Wets(1998)]{Rock98}
R.T. Rockafellar and R.J.B. Wets.
\newblock \emph{Variational Analysis}.
\newblock Springer-Verlag, New York, 1998.

\end{thebibliography}

\end{document}